\DeclareSymbolFont{cmlargesymbols}{OMX}{cmex}{m}{n}
\DeclareMathSymbol{\mycoprod}{\mathop}{cmlargesymbols}{"60}
\DeclareFontFamily{OT1}{pzc}{}
\DeclareFontShape{OT1}{pzc}{m}{it}{<-> s * [1.10] pzcmi7t}{}
\DeclareMathAlphabet{\mathpzc}{OT1}{pzc}{m}{it}
\numberwithin{equation}{subsection}
\newtheorem{theorem}{Theorem}[subsection]  %theorems numbered within subsections
\newtheorem{proposition}[theorem]{Proposition}
\theoremstyle{remark} % styled differently... not italicized
\newtheorem{definition}[theorem]{Definition}
\newtheorem{remark}[theorem]{Remark}
\newtheorem{example}[theorem]{Example}
\newcommand{\Z}{\mathbb{Z}}
\newcommand{\Hom}{\mathrm{Hom}}
\newcommand{\Vect}{\mathpzc{Vect}}
\newcommand{\Mod}{\mathpzc{Mod}}
\newcommand{\Bimod}{\mathpzc{Bimod}}
\mathchardef\mhyphen="2D
\title{On the Hochschild homology of involutive algebras}
\author{Rams\`es Fern\`andez-Val\`encia} 
\email{ramses.fernandez.valencia@gmail.com}
\author{Jeffrey Giansiracusa} 
\email{j.h.giansiracusa@swansea.ac.uk} 
\address{Department of Mathematics, Swansea University \\ Singleton Park \\ Swansea SA2 8PP, UK}
\newcommand{\ihomA}{\Hom_{A\mhyphen i\Bimod}} 
\newcommand{\homA}{\Hom_{A\mhyphen\Bimod}} 
\newcommand{\Aie}{A^{ie}}
\date{\today}
\begin{document}
\begin{abstract}
  We study the homological algebra of bimodules over involutive associative algebras.  We show that
  Braun's definition of involutive Hochschild cohomology in terms of the complex of
  involution-preserving derivations is indeed computing a derived functor: the $\Z/2$-invariants
  intersected with the center.  We then introduce the corresponding involutive Hochschild homology
  theory and describe it as the derived functor of the pushout of $\Z/2$-coinvariants and abelianization.
\end{abstract}
\maketitle

\section{Introduction}

Hochschild cohomology is a cohomology theory for associative algebras that describes their
deformation theory.  Under mild hypotheses, the groups $HH^*(A,A)$ can be defined in any of the
following equivalent ways:
\begin{enumerate}
\item the homology of the usual Hochschild cochain complex of $A$,
\item the homology of the complex of coderivations on the tensor coalgebra of $\Sigma A$ (or
  equivalently, the complex of continuous derivations on the completed tensor algebra of $\Sigma^{-1}A^\vee$).
\item the derived center of $A$,
\end{enumerate}
There is a corresponding Hochschild \emph{homology} theory that can be defined as the derived
abelianization of $A$ or by writing down the usual Hochschild chain complex.  The derived functor
description is perhaps most fundamental and it is based on the fact that $A$-bimodules form an
abelian category that can equivalently be described as the category of right modules over the
enveloping algebra $A^e = A\otimes A^{op}$. 

In \cite{Braun}, Braun studied the Hochschild theory of involutive algebras (and
$A_\infty$-algebras), meaning algebras equipped with a map $a\mapsto a^*$ such that $a^{**}=a$ and
$(ab)^*=b^*a^*$.  He introduced an involutive variant of Hochschild cohomology by restricting to the
subcomplex of the derivation complex consisting of derivations that commute with the involution.
The ordinary Hochschild cohomology of an involutive algebra splits as a sum of this involutive
Hochschild cohomology and a skew factor (assuming the characteristic of the ground field is not 2). 

The purpose of this short note is to develop enough homological algebra for bimodules over
involutive algebras to give a derived functor description of Braun's involutive Hochschild
cohomology.  From this perspective we are also able to define the corresponding involutive
Hochschild \emph{homology} theory.  One key novel feature of the involutive theory is that it is
based on the abelian category of \emph{involutive bimodules}.  In contrast with the non-involutive case,
involutive bimodules are actually equivalent to modules over a certain semidirect product of the
enveloping algebra with the group ring $k[\Z/2]$.

Our motivation for studying involutive Hochschild theory comes from the first author's work on
unoriented topological conformal field theories.  Costello \cite{Costello} showed that an open 2d
oriented TCFT is essentially a Calabi-Yau $A_\infty$-algebra, and such a theory admits a universal
extension to an open-closed theory with closed state space (the value of the functor on a circle)
given by the Hochschild chain complex of the algebra of the open theory.  In \cite{Ramses}, this
picture is extended to Klein (i.e., unoriented) 2d TCFTs: open theories now correspond to involutive
Calabi-Yau $A_\infty$-algebras, and the closed state space of the universal open-closed extension
turns out to be the involutive Hochschild chain complex of the open state algebra.

\subsection*{Acknowledgements}
Both authors were supported by EPSRC grant EP/I003908/2.

\section{Involutive algebras and their bimodules}

\subsection{Involutive algebras}

Let $k$ be a field. An \emph{involutive vector space} is a vector space $V$ (assumed to be over $k$)
equipped with an automorphism of order 2, which we will usually write as $v\mapsto v^*$.  I.e., it
is a representation of the cyclic group $\Z/2$.  We let $i\Vect_k$ denote the category of involutive
$k$-vector spaces and linear maps that commute with the involutions.

An \emph{involutive $k$-algebra} is an involutive vector space $A$ equipped with an associative and
unital multiplication map $A\otimes_k A \to A$ such that
\[
(ab)^* = b^*a^*
\] 
for any $a,b\in A$.  Note that it follows automatically that $1^*= 1$ and $0^*=0$.

\begin{remark}
  An associative $k$-algebra is the same as a monoid in the monoidal category of vector spaces with
  tensor product.  Although involutive vector spaces are the same as $\Z/2$ representations, and the
  tensor product $\otimes_k$ gives this category a monoidal structure, involutive algebras are
  \emph{not} the same as monoids in the monoidal category of $\Z/2$ representations; they are a
  restricted class of such monoids.  The tensor product $\otimes_{\Z/2}$ also provides a monoidal
  product on the category, but involutive algebras are not monoids for this structure.
\end{remark}

\begin{example}\label{ex:inv-algebras}
\begin{enumerate}
\item Any commutative algebra $A$ becomes an involutive algebra when equipped with the identity as
  involution.  More generally, any $k$-algebra map of order 2 fixing 1 makes $A$ an involutive algebra.

\item Let $V$ be an involutive vector space and let $TV = \bigoplus_n V^{\otimes n}$ be the tensor
  algebra on $V$.  The tensor algebra becomes an involutive algebra with involution given by \[(v_1
  \otimes \cdots \otimes v_n)^* = v_n^* \otimes \cdots \otimes v_1^*.\]

\item  Let $G$ be a discrete group.  The group ring $k[G]$ is an involutive $k$-algebra with involution
  given by $g\mapsto g^{-1}$.  
\end{enumerate}
\end{example}

\subsection{Involutive bimodules}

First suppose that $A$ is an associative $k$-algebra. An $A$-bimodule $M$ is a $k$-vector space with
left and right multiplication maps  $A\otimes_k M \to M$ and $M\otimes_k A \to M$ that commute:
$(a\cdot m) \cdot b = a\cdot (m\cdot b)$ for all $a,b \in A$ and $m\in M$.  In category-theoretic
terms, $M$ is a bimodule for the monoid $A$ in the monoidal category $(\Vect_k, \otimes_k)$.
Equivalently, an $A$-bimodule is the same as a left module over the enveloping algebra $A^e =
A\otimes_k A^{op}$.

Now let $A$ be an involutive $k$-algebra.  An \emph{involutive $A$-bimodule} is a bimodule equipped
with an involution satisfying the compatibility condition between the left and right actions and the involution.
\[
(a\cdot m)^* = m^* \cdot a^*.
\] 
Note that, unlike the non-involutive case, here the left and right $A$-module structures determine
each other, so an involutive bimodule is determined by a vector space equipped with both a left
$A$-module structure and an involution, but there is a compatibility condition that these two
structures must satisfy coming from the fact that the left and right $A$-module structures on a
bimodule commute.  This condition is:
\begin{equation}\label{eq:LR-compat}
b\cdot (a \cdot m^*)^*  = b\cdot (m \cdot a^*) = (b\cdot m)\cdot a^* = (a \cdot (b \cdot m)^*)^*
\end{equation}
for $a,b\in A$ and $m$ in an involutive bimodule $M$.

One can describe the category of involutive bimodules as a category of left modules as
  follows.  Consider the algebra $\Aie:= A^e \otimes k[\Z/2]$ with product defined by
\[
(x\otimes\tau^i) \cdot (y\otimes \tau^j) = (x\cdot\tau^i(y))\otimes \tau^{i+j},
\]
where $\tau$ is the generator of $\Z/2$ and it acts on $A^e = A\otimes A^{op}$ by
$\tau(a\otimes b) = b^* \otimes a^*$.  We call $\Aie$ the \emph{involutive enveloping algebra of
  $A$}.

\begin{proposition}
There is an equivalence of categories
\[
A\mhyphen i\Bimod \cong \Aie\mhyphen \Mod.
\]
\end{proposition}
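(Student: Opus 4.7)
The plan is to exhibit mutually inverse functors $F$ and $G$ between the two categories, each of which is the identity on underlying $k$-vector spaces and on morphisms; all the real content then reduces to matching the involutive bimodule compatibility~\eqref{eq:LR-compat} with the twisted product defining $\Aie$.

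First I would define $F \co A\mhyphen i\Bimod \to \Aie\mhyphen\Mod$. Given an involutive bimodule $M$, I would set
\[
(a\otimes b\otimes 1)\cdot m \defeq a\cdot m\cdot b, \qquad (a\otimes b\otimes \tau)\cdot m \defeq a\cdot m^*\cdot b,
\]
extending linearly. Verifying that this is a well-defined $\Aie$-action reduces, by the twisted product formula, to four cases indexed by $(i,j)\in\{0,1\}^2$. The cases $(0,0)$ and $(0,1)$ are formal; the case $(1,1)$ uses only $(m^*)^* = m$; and the remaining case $(1,0)$ comes down precisely to the identity $(a\cdot m\cdot b)^* = b^*\cdot m^*\cdot a^*$, which follows from a double application of~\eqref{eq:LR-compat}.

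Going the other way, I would define $G \co \Aie\mhyphen\Mod \to A\mhyphen i\Bimod$ by restricting the action of an $\Aie$-module $N$ along the $k$-algebra inclusion $A^e \hookrightarrow \Aie$, $x\mapsto x\otimes 1$, and declaring the involution to be $m^* \defeq (1\otimes 1\otimes \tau)\cdot m$. The relation $\tau^2=1$ in $\Aie$ ensures this squares to the identity, while the identity $(1\otimes 1\otimes \tau)(a\otimes 1\otimes 1) = (1\otimes a^*\otimes \tau)$ inside $\Aie$ (a direct computation using the twisted product rule) yields $(a\cdot m)^* = m^*\cdot a^*$, which is~\eqref{eq:LR-compat}; the analogous right-action check is identical.

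It is immediate from the construction that $F$ and $G$ are inverse on underlying $k$-vector spaces, and both preserve morphisms (which in either category are simply $k$-linear maps compatible with all the structure). Thus $FG$ and $GF$ are naturally isomorphic to the identity functors. The only obstacle I anticipate is bookkeeping the $A^{\op}$-twist inside $A^e$: once the dictionary $(a\otimes b) \leftrightarrow a\cdot(-)\cdot b$ and $\tau \leftrightarrow (-)^*$ is fixed, every remaining check is an immediate unfolding of definitions.
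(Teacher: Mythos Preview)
Your proposal is correct and follows essentially the same approach as the paper: both construct mutually inverse functors that are the identity on underlying vector spaces, obtain the bimodule structure from the $A^e$-subalgebra and the involution from $\tau$, and reduce the compatibility check to the identity $(a\cdot m\cdot b)^* = b^*\cdot m^*\cdot a^*$ versus the twisted product in $\Aie$. Your write-up is slightly more systematic in enumerating the four $(i,j)$ cases, but there is no substantive difference.
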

\begin{proof}
  The subalgebra of $\Aie$ consisting of elements of the form $x\otimes 1$ is isomorphic to
  $A^e$.  Given an $\Aie$-module $M$, the action of $A^e \subset \Aie$ defines an
  $A$-bimodule struture on $M$ as usual.  The action of the subalgebra $k[\Z/2]$ defines an
  involution on $M$, and this in fact yields an involutive bimodule since, by the associativity
  of the $\Aie$-action, multiplying by
\[
(1\otimes 1)\otimes \tau \cdot (a \otimes b) \otimes \tau = (b^* \otimes a^*) \otimes 1
\]
is equal to multiplying first by $(a \otimes b) \otimes \tau$ and then by
$(1\otimes 1)\otimes \tau$.  In terms of the induced bimodule structure and involution on $M$,
this says that $(b^* m a^*)$ is equal to $(am^*b)^*$, and hence $M$ becomes an involutive bimodule.

Conversely, if $M$ is an involutive bimodule then it becomes an $A^e$-module, its involution makes
it a module over $k[\Z/2]$, and the compatibility relation $(a m b)^* = b^* m a^*$ says that the
involution and bimodule structure combine to define an associative action of $\Aie$.
\end{proof}

One sees that the forgetful functor
\[
A\mhyphen i\Bimod \to A\mhyphen\Bimod.
\]
is faithful; however, it fails to be conservative (meaning that there are
involutive bimodules $M$ and $N$ that are not isomorphic in $A\mhyphen i\Bimod$, but they become
isomorphic in $A\mhyphen \Bimod$), and hence it is not full.  The following simple example illustrates this. 

\begin{example}
Let $A=k$ with the trivial involution, so $A$-bimodules are just $k$-vector spaces, and involutive
$A$-bimodules are just involutive vector spaces.  Let $V=k^2$ with the trivial involution, and let
$W=k^2$ with involution $(x,y)^* = (y,x)$.   As bimodules (i.e., vector spaces), $V$ and $W$ are clearly isomorphic,
but as involutive bimodules (i.e., involutive vector spaces) they are not.
\end{example}

If $M$ and $N$ are involutive $A$-bimodules, then we write $\ihomA(M,N)$ for the set of involutive
$A$-bimodule homomorphisms from $M$ to $N$, which is to say the set of bimodule homomorphisms that
commute with the involutions.  Both this and the set of bimodule homomophisms are $k$-vector spaces
and there is a natural linear inclusion map
\[
\ihomA(M,N) \hookrightarrow \homA(M,N).
\] 
However, the vector space $\ihomA(M,N)$ also carries an involution $f \mapsto f^*$ defined by
\[
f^*(m) = f(m)^*, \text{ or equivalently, } f(m^*).
\]

\subsection{Some functors and adjunctions}

Let $A$ be a $k$-algebra.  We first recall the adjunction between $A$-bimodules and vector
spaces. If $M$ is an $A$-bimodule then we may consider the functor
\[
\homA(M,-): A\mhyphen \Bimod \to \Vect_k.
\]
Note that in the special case of $M=A\otimes_k A$ with the bimodule structure given by 
\[a_1 \cdot (a_2 \otimes a_3) \cdot a_4 = a_1a_2 \otimes a_3 a_4,\]
the functor $\homA(A\otimes_k A, -)$ coincides with the forgetful functor sending a bimodule to its
underlying vector space.

If $V$ is a vector space and $M$ is an $A$-bimodule then the vector spaces $M\otimes_k V$ and
$\Hom_k(M,V)$ have canonical $A$-bimodule structures induced from the bimodule structure on $M$.
The functor \[M \otimes_k (-): \Vect_k \to A\mhyphen\Bimod\]
is left adjoint to $\homA(M,-)$.  A \emph{free} bimodule is a bimodule in the essential image of
$ (A\otimes_k A)\otimes_k (-)$.  When viewed as $A^e$-modules, they are free modules.  In section
\ref{sec:free-and-projective} below we will discuss an analogous notion of free involutive bimodules.

We now turn to the involutive variant of the above.  First suppose $V$ and $W$ are involutive vector
spaces.  While $V\otimes W$ has three involutions to choose from (from the involution on $V$, the
involution on $W$, or both at the same time), the quotient $V\otimes_{\Z/2} W$ inherits a canonical
involution 
\[
v\otimes w \mapsto v^* \otimes w = v\otimes w^*
\]
(the involution on $V$ is identified with the involution on $W$, and doing both involutions
simultaneously becomes the identity). This is a special case of the fact that the tensor product of
$R$-modules is again $R$-module when $R$ is a commutative ring; here $R$ is the group ring
$k[\Z/2]$.

Now let $A$ be an involutive algebra and $M$ an
involutive $A$-bimodule.  We can regard $\ihomA(M,-)$ as a functor
$A\mhyphen i\Bimod \to i\Vect_k$.  Given an involutive vector space $V$, the involutive vector space 
\[
M\otimes_{\Z/2} V
\]
becomes an involutive $A$-bimodule since
\begin{align*}
(m\otimes v)^* \cdot a^* & = (m^* \otimes v) \cdot a^* \\
                                         & = (m^* \cdot a^*)\otimes v \\
                                         & = (a\cdot m)^*\otimes v \\
                                         & = ((a\cdot m)\otimes v)^* = (a\cdot (m\otimes v))^*.
\end{align*}

\begin{proposition}
Let $A$ be an involutive algebra and $M$ and involutive $A$-bimodule.
% The functors 
% \[
% \ihomA(M,-): A\mhyphen i\Bimod \leftrightarrows i\Vect_k 
% \]
% is right adjoint to the functor $M \otimes_k^i (-) : i\Vect_k \to A\mhyphen i\Bimod$.
There is an adjuction of functors 
\[
M \otimes_{\Z/2} (-): i\Vect \leftrightarrows A\mhyphen i\Bimod : \ihomA(M,-).
\]
\end{proposition}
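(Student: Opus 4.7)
The plan is to reduce the claim to the familiar non-involutive tensor-hom adjunction $M\otimes_k(-) \dashv \homA(M,-)$ between $\Vect_k$ and $A\mhyphen\Bimod$, and then carefully identify, on each side of the isomorphism, the subset of morphisms that are compatible with the involutive structures. Concretely, for an involutive vector space $V$ and an involutive bimodule $N$, the classical adjunction supplies a natural bijection
\[
\Phi \co \Hom_{A\mhyphen\Bimod}(M\otimes_k V, N) \xrightarrow{\;\sim\;} \Hom_k(V, \homA(M,N)), \qquad \tilde f \longmapsto \bigl(v \mapsto \tilde f(-\otimes v)\bigr),
\]
and the goal is to show that $\Phi$ restricts to a bijection between the involutive subsets.

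First I would unpack what it means for a bimodule map $\tilde f \co M\otimes_k V \to N$ to correspond to an involutive bimodule map $f \co M\otimes_{\Z/2} V \to N$. Since $M\otimes_{\Z/2} V$ is by definition the quotient of $M\otimes_k V$ by the relation $m^*\otimes v \sim m\otimes v^*$, carrying the involution $(m\otimes v)^* = m^*\otimes v = m\otimes v^*$, the data of such an $f$ is equivalent to a bimodule map $\tilde f$ satisfying the two conditions
\[
\text{(i) } \tilde f(m^*\otimes v) = \tilde f(m\otimes v^*), \qquad \text{(ii) } \tilde f(m^*\otimes v) = \tilde f(m\otimes v)^*.
\]
Symmetrically, on the right-hand side, a map $\phi \co V \to \ihomA(M,N)$ in $i\Vect$ amounts to a linear $\phi \co V \to \homA(M,N)$ such that (a) each $\phi(v)$ is involution-preserving, i.e.\ $\phi(v)(m^*) = \phi(v)(m)^*$, and (b) $\phi(v^*) = \phi(v)^*$, where the involution on $\ihomA(M,N)$ is defined by $\phi(v)^*(m) = \phi(v)(m^*)$.

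Then I would translate (i) and (ii) across $\Phi$ using $\phi(v)(m) = \tilde f(m\otimes v)$: condition (ii) becomes $\phi(v)(m^*) = \phi(v)(m)^*$, which is precisely (a), while condition (i) becomes $\phi(v^*)(m) = \phi(v)(m^*)$, which combined with (a) is precisely (b). Thus $\tilde f$ satisfies (i)+(ii) if and only if $\phi = \Phi(\tilde f)$ satisfies (a)+(b), so $\Phi$ restricts to a bijection
\[
\ihomA(M\otimes_{\Z/2} V, N) \cong \Hom_{i\Vect}(V, \ihomA(M,N)).
\]
Naturality in $V$ and $N$ is inherited directly from naturality of $\Phi$, since the restrictions are cut out by conditions that are themselves natural.

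The only step requiring any care, and the one I would spend the most time presenting cleanly, is the dictionary between the two equivalent formulas $f^*(m) = f(m)^* = f(m^*)$ for the involution on $\ihomA(M,N)$ and the two defining relations (descent to the quotient and commutation with involutions) on $M\otimes_{\Z/2} V$; the verifications themselves are short calculations once the translation is fixed, so there is no real obstacle beyond this bookkeeping.
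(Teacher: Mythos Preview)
Your proposal is correct and follows essentially the same argument as the paper: reduce to the classical tensor--hom adjunction $M\otimes_k(-) \dashv \homA(M,-)$ and verify that the involutive conditions on each side match under this bijection. The paper packages your (i)+(ii) and (a)+(b) into the single chain of equalities $f(m\otimes v^*)=f(m^*\otimes v)=f(m\otimes v)^*$ versus $g(v)(m)^*=g(v)(m^*)=g(v^*)(m)$, but the content is identical; your explicit mention of naturality is a small addition the paper leaves implicit.
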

\begin{proof}
  Let $L$ be an involutive $A$-bimodule and $V$ an involutive vector space.  A morphism of
  $A$-bimodules $f: M\otimes_k V \to L$ is adjoint to a morphism of vector spaces
  $g: V \to \homA(M,L)$.  Now we claim that $f$ descends to a morphism of involutive bimodules
  $M\otimes_{\Z/2} V \to L$ if and only if $g$ factors through a morphism of involutive vector spaces
\[
\widetilde{g}: V \to \ihomA(M,L).
\]
I.e., we claim that 
\[
f(m\otimes v^*) = f(m^* \otimes v) = f(m \otimes v)^*
\]
 if and only if
\[
g(v)(m)^* = g(v)(m^*) = g(v^*)(m).
\]
 To see this, first suppose that $f$ descends to an involutive morphism.  Then we have
\begin{align*}
g(v)(m)^* & = f(m\otimes v)^* \\
 & = f(m\otimes v^*) = f(m^* \otimes v)\\
 & = g(v^*)(m) = g(v)(m^*),
\end{align*}
where the equalities on the first and third lines are due to $f$ and $g$ being adjoint, and the
equalities on the second line come from the hypotheses on $f$.  The verification of the other
direction is just a permutation of the above sequence of steps.
\end{proof}

We now turn our attention to the functor $\otimes_{\Aie}$.  Given involutive bimodules $M$
and $N$, $M\otimes_{\Aie} N$ is \textit{a priori} a vector space. It can be described as
the quotient of $M\otimes_{A^e} N$ by the vector subspace spanned by the elements
\[m^* \otimes n - m \otimes n^*\]
for $m\in M$ and $n\in N$, and so as with $M \otimes_{\Z/2} N$, it carries an involution:
$(m\otimes n)^* = m^*\otimes n = m \otimes n^*$.

\begin{proposition}\label{prop:Ae-to-Aie-quotient}
  Given involutive bimodules $M$ and $N$, let $\Z/2$ act on $M\otimes_A N$ by
\[
m\otimes n \mapsto m^* \otimes n^*.
\]
Then there is an isomorphism of vector spaces
\[
(M\otimes_A N)_{\Z/2} \cong M\otimes_{\Aie} N.
\]
\end{proposition}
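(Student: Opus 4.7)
The plan is to identify both $(M \otimes_A N)_{\Z/2}$ and $M \otimes_{\Aie} N$ as quotients of the common vector space $M \otimes_{A^e} N$ by subspaces that turn out to coincide, after which the asserted isomorphism is tautological.

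The first step will be to check that the candidate involution $\sigma(m \otimes n) = m^* \otimes n^*$ descends from $M \otimes_k N$ to a well-defined $\Z/2$-action on $M \otimes_{A^e} N$. Writing the balancing relations in the standard form $bma \otimes n = m \otimes anb$, applying $\sigma$ componentwise sends this to $a^* m^* b^* \otimes n^* = m^* \otimes b^* n^* a^*$, which is the same balancing relation with $(a,b)$ replaced by $(b^*, a^*)$; so $\sigma$ is well defined and clearly squares to the identity. With $\sigma$ in hand, $(M \otimes_{A^e} N)_{\Z/2}$ is by definition the quotient of $M \otimes_{A^e} N$ by the subspace $Y$ generated by the elements $m \otimes n - m^* \otimes n^*$, while by the description given immediately above the proposition, $M \otimes_{\Aie} N$ is the quotient by the subspace $X$ generated by $m^* \otimes n - m \otimes n^*$.

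The heart of the argument is to show $X = Y$. Modulo $X$ the relation $m^* \otimes n = m \otimes n^*$ holds identically in $m$ and $n$; substituting $n \mapsto n^*$ (legitimate since this relation is universal) yields $m^* \otimes n^* = m \otimes n^{**} = m \otimes n$, so every generator of $Y$ vanishes modulo $X$, giving $Y \subseteq X$. The symmetric substitution applied to the $Y$-relation produces $m \otimes n^* = m^* \otimes n^{**} = m^* \otimes n$, giving $X \subseteq Y$. Hence $X = Y$ and the two quotients of $M \otimes_{A^e} N$ coincide. There is no real obstacle beyond the well-definedness check for $\sigma$; the remainder is a short symbolic manipulation whose only substantive ingredient is that $n \mapsto n^*$ is itself an involution.
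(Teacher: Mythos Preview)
There is a genuine gap. Your plan is to realize $(M\otimes_A N)_{\Z/2}$ as a quotient of $M\otimes_{A^e} N$, but you never carry this out: after checking that $\sigma$ is well defined on $M\otimes_{A^e} N$, you silently replace $(M\otimes_A N)_{\Z/2}$ by $(M\otimes_{A^e} N)_{\Z/2}$ and proceed with the latter. These are \emph{a priori} different objects. The tensor product $M\otimes_A N$ balances only the right action on $M$ against the left action on $N$; the further relation $am\otimes n = m\otimes na$ is what distinguishes $M\otimes_{A^e} N$. For your identification to go through you must show that this extra relation already holds in $(M\otimes_A N)_{\Z/2}$, and that is precisely the computation the paper supplies:
\[
[am\otimes n]=[(am)^*\otimes n^*]=[m^*a^*\otimes n^*]=[m^*\otimes a^*n^*]=[m\otimes na].
\]
Without this step you have only established $(M\otimes_{A^e} N)_{\Z/2}\cong M\otimes_{\Aie} N$, which is strictly weaker than what is claimed.

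Your $X=Y$ argument is correct and tidy, but it sits downstream of the missing step. As a side remark, the formula $m\otimes n\mapsto m^*\otimes n^*$ does not even define an honest map on $M\otimes_A N$: it carries the relation $ma\otimes n - m\otimes an$ to $a^*m^*\otimes n^* - m^*\otimes n^*a^*$, which is not of $\otimes_A$-type. So your instinct to work on $M\otimes_{A^e} N$ is the right one; the point is that passing from $(M\otimes_A N)_{\Z/2}$ (interpreted as the quotient by the span of $m\otimes n - m^*\otimes n^*$) down to a quotient of $M\otimes_{A^e} N$ requires exactly the derivation above, and that is the substantive content of the proposition.
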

\begin{proof}
% Let $K \subset M\otimes_A N$ denote the subspace spanned by elements of the form $(m^* \otimes n - m
% \otimes n^*)$.
In $(M\otimes_A N)_{\Z/2}$ 
% $= M\otimes_A N / K$
we have
\[
[am \otimes n] = [(am)^* \otimes n^*] = [m^* a^* \otimes n^*] = [m^* \otimes a^* n^*] = [m \otimes na],
\]
so $(M\otimes_A N)_{\Z/2}$ is a quotient of $M\otimes_{\Aie} N$.  On the other hand,
$M\otimes_{\Aie} N$ is clearly a quotient of $(M\otimes_A N)_{\Z/2}$, and so the two are isomorphic.
\end{proof}

Summing up, we have functors 
\begin{align*}
 (-) \otimes_{\Z/2} (-) : \:& i\Vect_k \times i\Vect_k \to i\Vect_k \\
\text{or}\quad &  A\mhyphen i\Bimod \times  i\Vect_k \to  A\mhyphen i\Bimod,\\
\end{align*}
and
\[
 (-) \otimes_{\Aie} (-) :  A\mhyphen i\Bimod \times  A\mhyphen i\Bimod \to  i\Vect_k.
\]

\subsection{Center and Abelianization}

We first recall the non-involutive setup.  The \emph{center} of an $A$-bimodule $M$ is the vector
subspace
\[
Z(M) := \{m\in M \:\: | \:\: am = ma \text{ for all } a\in A\} \subset M;
\]
it is a bimodule over the center of $A$, and is naturally isomorphic to $\homA (A,M)$.  The
\emph{abelianization} of $M$ is the quotient vector space 
\[
Ab(M) := M/(am \sim ma \:\: | \:\: m\in M, a\in A),
\]
which canonically has the structure of a $A$-bimodule and is naturally isomorphic to $A\otimes_{A^e} M$.

We now turn to the case of involutive bimodules.  Let $A$ be an involutive algebra and $M$ an
involutive $A$-bimodule.  We define the \emph{involutive center} of $M$ to be the involutive vector space
\[
iZ(M) := \ihomA(A,M),
\]
and we define the \emph{involutive abelianization} of $M$ to be the involutive vector space
\[
iAb(M) := A\otimes_{\Aie} M.
\]

\begin{proposition}
The involutive center of $M$ is naturally isomorphic to the pullback (i.e., intersection) of the involutive
vector spaces
\[
Z(M) \hookrightarrow M \hookleftarrow M^{\Z/2}.
\]
The involutive abelianization of $M$ is naturally isomorphic to the pushout of the involutive vector
spaces
\[
Ab(M) \leftarrow M \to M_{\Z/2}.
\]
\end{proposition}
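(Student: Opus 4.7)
The plan is to treat the center and the abelianization statements separately, in each case by a direct comparison of universal properties.

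For the involutive center, I would exploit evaluation at $1 \in A$. The classical isomorphism $\homA(A,M) \cong Z(M)$, $f \mapsto f(1)$, restricts to an isomorphism on the involutive subspace: since $1^* = 1$, any $f \in \ihomA(A,M)$ satisfies $f(1) = f(1^*) = f(1)^*$, so $f(1) \in Z(M) \cap M^{\Z/2}$; conversely, any $z \in Z(M) \cap M^{\Z/2}$ defines an involutive bimodule map $a \mapsto az$, involution-compatibility reducing to $(az)^* = z^* a^* = z a^* = a^* z$ by centrality and $z^*=z$. The intersection $Z(M) \cap M^{\Z/2}$ is by definition the pullback in the statement, giving the required isomorphism.

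For the involutive abelianization, the plan is first to apply Proposition \ref{prop:Ae-to-Aie-quotient} to the pair $(A, M)$, giving
\[
iAb(M) = A \otimes_{\Aie} M \cong (A \otimes_{A^e} M)_{\Z/2} = Ab(M)_{\Z/2}.
\]
This reduces the problem to identifying $Ab(M)_{\Z/2}$ with the pushout in $i\Vect_k$ of the diagram $Ab(M) \leftarrow M \to M_{\Z/2}$. I would do this by exhibiting both as the quotient of $M$ by the combined families of relations $am - ma$ and $m - m^*$: the pushout has this presentation directly from its universal property, while for $Ab(M)_{\Z/2}$ one checks that the involution on $M$ descends to $Ab(M)$ (the relation $am - ma$ maps under $*$ to $m^*a^* - a^*m^* = -(a^*m^* - m^*a^*)$, another instance of the same family) and then $\Z/2$-coinvariants impose $[m] = [m^*]$. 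Since the forgetful functor $i\Vect_k \to \Vect_k$ preserves colimits, the pushout in $i\Vect_k$ has the same underlying vector space equipped with the induced involution, so no separate verification of involutive structure is required.

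No step here is a serious obstacle; the only real care needed is in tracking the several $\Z/2$-actions appearing in Proposition \ref{prop:Ae-to-Aie-quotient}, but once that proposition is in hand, both halves of the statement reduce to a routine unwinding of universal properties.
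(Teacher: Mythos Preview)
Your argument for the involutive center is essentially identical to the paper's: both evaluate at $1$ and check that this gives a bijection onto $Z(M)\cap M^{\Z/2}$.

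For the involutive abelianization your approach is correct but organized a bit differently from the paper. The paper builds explicit mutually inverse maps between the pushout $P$ and $A\otimes_{\Aie}M$: the map $P\to A\otimes_{\Aie}M$ comes from the universal property, and the inverse sends $[a\otimes m]\mapsto [am]$, checking directly that this is well defined. You instead pass through the identification $A\otimes_{\Aie}M\cong Ab(M)_{\Z/2}$ and then match presentations. Both routes amount to the same quotient bookkeeping. One small point of care: Proposition~\ref{prop:Ae-to-Aie-quotient} as stated concerns $(M\otimes_A N)_{\Z/2}$, not $(M\otimes_{A^e}N)_{\Z/2}$, so invoking it literally for the pair $(A,M)$ does not immediately yield $Ab(M)_{\Z/2}$. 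What you actually want is the description, given in the paragraph just before that proposition, of $M\otimes_{\Aie}N$ as the quotient of $M\otimes_{A^e}N$ by $m^*\otimes n - m\otimes n^*$; specializing to $(A,M)$ and using $1\otimes m$ as representatives gives $Ab(M)_{\Z/2}$ directly. With that adjustment your argument is complete.
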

\begin{proof}
A morphism of involutive $A$-bimodules $f:A \to M$ is entirely determined by $f(1)$, which must be
an element in $Z(M)$ since $a \cdot f(1) = f(a) = f(1) \cdot a$ for any $a\in A$, and must be fixed
by the involution since $1\in A$ is fixed.  This shows that $iZ(M)$ is contained in $Z(M)\cap
M^{\Z/2}$.  Conversely, sending $1\in A$ to any element in this intersection uniquely extends to a
well-defined bimodule morphism that clearly commutes with the involutions.

Now consider the pushout $P$ of $Ab(M) \leftarrow M \to M_{\Z/2}$.  First observe that, by the
universal property of the pushout, there is a natural map $P \to A \otimes_{\Aie} M$ sending
$[m]$ to $[1\otimes m]$.  An inverse to this should send $[a\otimes m]$ to $[am]$, and it remains to
check that this is well defined.  This formula gives a map $f: A\otimes_{A^e} M \to P$, and it
satisfies
\[
f(a^*\otimes m) = [a^*m] = [ma^*] = [am^*] = f(a \otimes m^*),
\]
so it descends to $A\otimes_{\Aie} M$, giving the desired inverse.
\end{proof}

\section{Homological algebra}

Let $A$ be an involutive $k$-algebra.  The categories of involutive vector spaces and involutive
$A$-bimodules are abelian categories; this follows immediately from the identifications as module categories,
\[
i\Vect_k \cong k[\Z/2]\mhyphen \Mod \quad \text{and} \quad A\mhyphen i\Bimod \cong \Aie\mhyphen \Mod.
\]
Hence we may talk about projective objects, chain complexes, and quasi-isomorphisms in each of these
categories.  In this section we will show that if $A$ is projective as an involutive vector space
then the usual construction of the bar resolution in fact provides a projective, and hence flat,
resolution of $A$ in the category of involutive bimodules.

\subsection{Flat and projective involutive bimodules}\label{sec:free-and-projective}

Projective objects in $i\Vect_k$ and $A\mhyphen i\Bimod$ are defined by the usual lifting property.
As these are module categories, the usual characterization holds: an involutive vector space is
projective if, viewed as a $k[\Z/2]$-module, it is a direct sumand of a free module, and an
involutive bimodule is projective if and only if, when viewed as a $\Aie$-module, it is a
direct summand of a free module. For the purposes of this paper we will not need to give a more
concrete characterization of projective involutive bimodules.

\begin{remark}
  If the characteristic of $k$ is different from 2 then every finite dimensional involutive vector
  space (i.e., $\Z/2$-representation) is projective by Maschke's Theorem.  In characteristic 2 a
  finite dimensional involutive vector space is projective if and only if it does not contain
  the trivial representation of $\Z/2$ as a direct summand.  This is because every finite dimensional
  $\Z/2$-representation splits as a sum of copies of the trivial representation and the regular
  representation, which is indecomposable.  While the trivial representation is a subrepresentation
  of the regular representation, it is not a direct summand, nor is it a summand of any number of
  copies of the regular representation, and hence it is not projective, nor is anything that contains it
  as a summand.
\end{remark}

\begin{proposition}\label{prop:free-bimod}
  Let $A$ be an involutive algebra and $V$ and involutive vector space.  Considering $\Aie$ as a
  $k[\Z/2]$-bimodule by the inclusion $k[\Z/2] \hookrightarrow \Aie$, we have an isomorphism of
  vector spaces
\[
\Aie \otimes_{\Z/2} V \cong A^e \otimes_k V,
\]
and under this identification, the involution on the left (coming from the left action of $\Z/2$ on
$\Aie$) corresponds with $(a\otimes b \otimes v)^* = b^*\otimes a^* \otimes v^*$.
\end{proposition}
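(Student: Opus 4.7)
The plan is to identify $\Aie$ with $A^e \otimes_k k[\Z/2]$ as a right $k[\Z/2]$-module where $k[\Z/2]$ acts only on the second tensor factor, then use this free-module description to evaluate $\Aie \otimes_{k[\Z/2]} V$, and finally trace the residual left $\Z/2$-action through the identification.

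First I would observe that as a $k$-vector space $\Aie = A^e \otimes_k k[\Z/2]$ by definition, and the inclusion $k[\Z/2] \hookrightarrow \Aie$ sends $\tau^j$ to $(1 \otimes 1) \otimes \tau^j$. Since $\tau$ acts on $A^e$ via $\tau(a \otimes b) = b^* \otimes a^*$ and hence fixes $1 \otimes 1$, the multiplication rule in $\Aie$ gives
\[
((a \otimes b) \otimes \tau^i) \cdot ((1 \otimes 1) \otimes \tau^j) = \bigl((a \otimes b) \cdot \tau^i(1 \otimes 1)\bigr) \otimes \tau^{i+j} = (a \otimes b) \otimes \tau^{i+j},
\]
so the right $k[\Z/2]$-action only shifts the $k[\Z/2]$ tensor factor. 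Consequently $\Aie$ is free as a right $k[\Z/2]$-module, and
\[
\Aie \otimes_{k[\Z/2]} V \;\cong\; A^e \otimes_k k[\Z/2] \otimes_{k[\Z/2]} V \;\cong\; A^e \otimes_k V,
\]
with the class of $((a \otimes b) \otimes 1) \otimes v$ corresponding to $a \otimes b \otimes v$.

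For the involution, recall that on $\Aie \otimes_{\Z/2} V$ it is given by $x \otimes v \mapsto (((1 \otimes 1) \otimes \tau) \cdot x) \otimes v = x \otimes v^*$, coming from the left $\Z/2$-action on $\Aie$. Computing in $\Aie$,
\[
((1 \otimes 1) \otimes \tau) \cdot ((a \otimes b) \otimes 1) = \bigl((1 \otimes 1) \cdot \tau(a \otimes b)\bigr) \otimes \tau = (b^* \otimes a^*) \otimes \tau.
\]
Since the tensor product is taken over $k[\Z/2]$, the stray $\tau$ slides across:
\[
((b^* \otimes a^*) \otimes \tau) \otimes v = ((b^* \otimes a^*) \otimes 1) \otimes (\tau \cdot v) = ((b^* \otimes a^*) \otimes 1) \otimes v^*,
\]
which under the identification above reads $(a \otimes b \otimes v)^* = b^* \otimes a^* \otimes v^*$, as required.

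The substantive content is really the first step, namely that $\tau$ fixes $1 \otimes 1$ so the right $k[\Z/2]$-action decouples; everything else is bookkeeping. The main place to be careful is the sign/order reversal in the final step, where moving $\tau$ through the tensor turns the left $\Z/2$-action on $\Aie$ (which reverses $a$ and $b$) into an involution that also acts nontrivially on $V$.
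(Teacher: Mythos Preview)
Your proof is correct and follows essentially the same route as the paper: both use the chain $\Aie \otimes_{\Z/2} V = A^e \otimes_k k[\Z/2] \otimes_{k[\Z/2]} V \cong A^e \otimes_k V$ and then trace the left $\tau$-action through, sliding the resulting $\tau$ across the tensor to land on $v$. Your explicit verification that the right $k[\Z/2]$-action only shifts the $k[\Z/2]$ factor (because $\tau$ fixes $1\otimes 1$) is a small elaboration the paper leaves implicit, but the argument is otherwise the same.
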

\begin{proof}
We have an isomorphism of vector spaces, 
\[
\Aie\otimes_{\Z/2} V = A^e \otimes k[\Z/2]\otimes_{\Z/2} V \cong A^e \otimes_k V,
\]
and one easily checks that this is in fact an isomorphism of $A^e$-modules, i.e., $A$-bimodules.
This isomorphism is defined by sending
$(a\otimes b \otimes \tau)\otimes v^*= (a\otimes b\otimes 1)\otimes v$ to $(a\otimes b) \otimes v$,
for $a\otimes b \in A^e$ and $v\in V$.

It remains to examine the involution.  The involution on $\Aie\otimes_{\Z/2} V$, given by left
multiplication by $\tau$, is
\[
(a \otimes b \otimes \tau^i )\otimes v \mapsto (b^* \otimes a^* \otimes \tau^{i+1})\otimes v = (b^*
\otimes a^* \otimes \tau^i)\otimes v^*.
\]
Thus this corresponds to the involution $(a\otimes b) \otimes v \mapsto (b^* \otimes a^*) \otimes
v^*$ on $A^e\otimes_k V$.
\end{proof}

\begin{proposition}\label{prop:proj-bimod}
  Let $V$ be a projective involutive vector space.  The involutive bimodule $\Aie\otimes_{\Z/2}
  V$ is projective.
\end{proposition}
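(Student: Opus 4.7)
The plan is to combine the adjunction $\Aie \otimes_{\Z/2}(-) \dashv \ihomA(\Aie,-)$ established in the previous subsection with the characterisation of projective $k[\Z/2]$-modules as direct summands of free modules, thereby reducing the question to the easy case $V = k[\Z/2]$.

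First I would verify the base case: $\Aie \otimes_{\Z/2} k[\Z/2] \cong \Aie$ as involutive bimodules. This is the standard identity $R \otimes_S S \cong R$ for a left $S$-module $R$, applied to $S = k[\Z/2]$ and $R = \Aie$ viewed as a left $k[\Z/2]$-module via the subalgebra inclusion $k[\Z/2] \hookrightarrow \Aie$. Since we only tensor over $k[\Z/2]$ on the left, the full left $\Aie$-action on $\Aie$ passes to the tensor product, making this an isomorphism of left $\Aie$-modules, i.e., of involutive bimodules.

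Next, the functor $\Aie \otimes_{\Z/2}(-)$ is a left adjoint (by the preceding proposition with $M = \Aie$), so it preserves all colimits, in particular coproducts and direct summands. A projective involutive vector space $V$ is, by definition, a direct summand of a free $k[\Z/2]$-module $\bigoplus_I k[\Z/2]$. Applying the functor, $\Aie \otimes_{\Z/2} V$ becomes a direct summand of
\[
\Aie \otimes_{\Z/2} \bigoplus_I k[\Z/2] \;\cong\; \bigoplus_I \Aie,
\]
which is a free $\Aie$-module. Hence $\Aie \otimes_{\Z/2} V$ is a direct summand of a free involutive bimodule, and is therefore projective.

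The only potential obstacle is ensuring that the identifications respect the involutive-bimodule structure, not merely the underlying vector space structure. This reduces to the observation, already implicit in the construction of $\Aie$, that under the equivalence $A\mhyphen i\Bimod \cong \Aie\mhyphen\Mod$ the involution on an involutive bimodule is precisely the action of the generator $\tau \in k[\Z/2] \subset \Aie$; with this in hand all the isomorphisms in the argument are routine to verify.
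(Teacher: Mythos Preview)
Your proof is correct and takes a genuinely different route from the paper. The paper verifies the lifting property directly: using the identification $\Aie \otimes_{\Z/2} V \cong A^e \otimes_k V$ from Proposition~\ref{prop:free-bimod}, it observes that involutive bimodule maps $A^e \otimes_k V \to N$ correspond bijectively to morphisms $V \to N$ in $i\Vect_k$, so lifting the former along a surjection reduces to lifting the latter, which is possible since $V$ is projective. Your argument is instead the standard ``extension of scalars preserves projectives'' proof: the functor $\Aie \otimes_{\Z/2}(-)$ sends $k[\Z/2]$ to $\Aie$, commutes with direct sums, and preserves retracts, hence carries summands of free $k[\Z/2]$-modules to summands of free $\Aie$-modules. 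The paper's approach is more hands-on and connects directly with the explicit description used in the bar-resolution argument that follows; yours is more structural and makes transparent that nothing special about $\Aie$ is used beyond the ring inclusion $k[\Z/2]\hookrightarrow\Aie$. One small remark: retracts are preserved by \emph{any} functor, so invoking the left-adjoint property is not strictly needed for the direct-summand step---it is only needed for preservation of (possibly infinite) coproducts.
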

\begin{proof}
  We make use of the identification from Proposition \ref{prop:free-bimod}.
  Bimodule homomorphisms $f: A^e \otimes_k V  \to N$ are in bijection with linear maps
  $g: V\to N$ via the correspondence
\[
g(v) = f(1\otimes v \otimes 1), \text{ and } f(a\otimes v \otimes b) = ag(v)b.
\] 
Moreover, $f$ commutes with the involutions if and only if $g$ does.   Thus, to lift $f$ along a
surjection $M \to N$ of involutive bimodules, it suffices to produce a lift of $g$ in the category
of involutive vector spaces, and such a lift exists since $V$ is projective.
\end{proof}

An involutive bimodule $M$ is \emph{flat} if it is flat as an $\Aie$-module; equivalently,
it is flat if the functor $M\otimes_{\Aie} : A\mhyphen i\Bimod \to i\Vect$ is exact.  As
usual, if $M$ is a projective involutive bimodule then it is flat.

\subsection{The bar resolution as an involutive resolution}

First recall the classical bar resolution of an associative algebra $A$.  We write $Bar(A)$ for the
chain complex of bimodules whose degree $n$ part is $A^{\otimes_k (n+2)}$. This has the bimodule
structure given by
$a \cdot (a_0 \otimes \cdots \otimes a_{n+1}) \cdot b = aa_0 \otimes \cdots \otimes a_{n+1}b$ and in
particular, it is free, and hence projective as a bimodule.  The differential is defined by the
formula
\[
d(a_0\otimes \cdots \otimes a_{n+1}) = \sum_{i=0}^n (-1)^i a_0 \otimes \cdots \otimes a_i a_{i+1} \otimes
\cdots \otimes a_{n+1}.
\] 
The bar resolution of $A$ is augmented by the multiplication map $Bar_0(A) = A\otimes_k A \to A$.
Let $\Sigma A$ denote the graded vector space consisting of $A$ concentrated in degree $1$, and
write $T\Sigma A = \bigoplus_n (\Sigma A)^{\otimes n}$ for the tensor coalgebra with grading induced
from that of $\Sigma A$.  Regarding $T\Sigma A$ as a vector space, there is an isomorphism of graded
bimodules
\[
Bar(A) \stackrel{\cong}{\to} A^e \otimes_k T\Sigma A
\]
given by
$a_0 \otimes \cdots \otimes a_{n+1} \mapsto (a_0 \otimes a_{n+1}) \otimes (a_1\otimes \cdots \otimes
a_n)$.
I.e., $Bar(A)$ is the free graded $A$-bimodule generated by the underlying vector space $T\Sigma A$. 

Now suppose that $A$ is an involutive algebra.  In this case $T\Sigma A$ has an involution given by
$(a_1\otimes \cdots \otimes a_n)^* = a_n^* \otimes \cdots \otimes a_1^*$.  The bar resolution has an
involution given by
\[
(a_0 \otimes \cdots \otimes a_{n+1})^* = a_{n+1}^* \otimes \cdots \otimes a_0^*,
\]
and so we see that $Bar(A) \cong A^e \otimes_k T\Sigma A \cong \Aie\otimes_{\Z/2} T \Sigma
A$ is actually an isomorphism of involutive graded bimodules.  One can easily check that the
differential in $Bar(A)$ commutes with the involutions.  Hence we have:

\begin{proposition}\label{prop:resolution}
  If $A$ is an involutive algebra that is projective as an involutive vector space then the complex
  $Bar(A)$ is a projective resolution of $A$ as an involutive bimodule.
\end{proposition}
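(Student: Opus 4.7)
The plan is to verify the two defining conditions of a projective resolution: (i) each chain $Bar_n(A)$ is a projective involutive $A$-bimodule, and (ii) the augmented complex
\[
\cdots \to Bar_1(A) \to Bar_0(A) \to A \to 0
\]
is exact in $A\mhyphen i\Bimod$. The first condition is the genuinely new statement; the second reduces to the classical non-involutive case.

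For (i), I would chain the classical identification $Bar_n(A) \cong A^e \otimes_k A^{\otimes n}$, sending $a_0 \otimes \cdots \otimes a_{n+1}$ to $(a_0 \otimes a_{n+1}) \otimes (a_1 \otimes \cdots \otimes a_n)$, with the isomorphism $A^e \otimes_k V \cong \Aie \otimes_{\Z/2} V$ of Proposition \ref{prop:free-bimod}, taking $V = A^{\otimes n}$ equipped with the involution $(a_1 \otimes \cdots \otimes a_n)^* = a_n^* \otimes \cdots \otimes a_1^*$. The key check is that the bar involution $(a_0 \otimes \cdots \otimes a_{n+1})^* = a_{n+1}^* \otimes \cdots \otimes a_0^*$ transports, under this chain, to precisely the involution $(a \otimes b \otimes v)^* = b^* \otimes a^* \otimes v^*$ on $A^e \otimes_k V$ computed in Proposition \ref{prop:free-bimod}. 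Once this is done, Proposition \ref{prop:proj-bimod} reduces projectivity of $Bar_n(A)$ to projectivity of $A^{\otimes n}$ as a $k[\Z/2]$-module, which follows from the hypothesis on $A$ because tensor products over $k$ of projective $k[\Z/2]$-modules are projective (via the Hopf algebra structure on $k[\Z/2]$, i.e., the twisting isomorphism $k[\Z/2] \otimes_k W \cong k[\Z/2] \otimes_k W_{\mathrm{triv}}$).

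For (ii), I would use the classical contracting homotopy $h(a_0 \otimes \cdots \otimes a_{n+1}) = 1 \otimes a_0 \otimes \cdots \otimes a_{n+1}$, which satisfies $dh + hd = \id$ on the augmented bar complex and shows it is contractible as a complex of $k$-vector spaces. Since the forgetful functor $A\mhyphen i\Bimod \to \Vect_k$ is exact and reflects exact sequences (kernels and cokernels in $A\mhyphen i\Bimod \cong \Aie\mhyphen\Mod$ are computed on underlying vector spaces), this is enough to conclude exactness in $A\mhyphen i\Bimod$. One must also verify that the bar differential commutes with the involution, which is a direct term-by-term check.

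The main obstacle is the bookkeeping of involutions in step (i): the ``reverse-and-involute'' $\Z/2$-action on $A^{\otimes n}$ coming from the bar complex is not the naive diagonal action, and one must be careful both in transporting it through the successive identifications and in verifying that it preserves projectivity of $A$. Once the involutions are matched correctly, however, the proposition becomes a direct consequence of Propositions \ref{prop:free-bimod} and \ref{prop:proj-bimod} combined with the classical contracting homotopy.
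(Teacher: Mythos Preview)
Your approach mirrors the paper's almost exactly: the paper's proof is the single sentence ``This follows directly from Proposition~\ref{prop:proj-bimod}'', relying on the identification $Bar(A)\cong \Aie\otimes_{\Z/2} T\Sigma A$ as involutive graded bimodules made in the preceding paragraph, with exactness left implicit as classical. Your added detail on exactness via the contracting homotopy and on matching the bar involution with the involution of Proposition~\ref{prop:free-bimod} is correct and simply fills in what the paper omits.

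The concern you flag about the reverse-and-involute action on $A^{\otimes n}$ is, however, a genuine gap rather than mere bookkeeping, and the Hopf-twisting argument you propose does not close it. The twisting isomorphism $k[\Z/2]\otimes_k W\cong k[\Z/2]\otimes_k W_{\mathrm{triv}}$ shows that the \emph{diagonal} $\Z/2$-action on $A^{\otimes n}$ is projective when $A$ is, but the bar involution is $a_1\otimes\cdots\otimes a_n\mapsto a_n^*\otimes\cdots\otimes a_1^*$, which is different. Already for $n=0$ one has $A^{\otimes 0}=k$ with trivial involution, not projective over $k[\Z/2]$ in characteristic~$2$; and for $n=2$ with $A=k[\Z/2]$ (the regular representation) one computes that $A^{\otimes 2}$ under the reverse-and-involute action decomposes as $k[\Z/2]\oplus k\oplus k$, again not projective. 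So the reduction to Proposition~\ref{prop:proj-bimod} does not go through in characteristic~$2$ as written---a gap your proposal shares with the paper's one-line proof. In characteristic different from~$2$ every $k[\Z/2]$-module is projective and the issue evaporates, so both arguments are fine there.
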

\begin{proof}
This follows directly from Proposition \ref{prop:proj-bimod}.
\end{proof}

\subsection{Involutive Hochschild homology and cohomology}

Involutive Hochschild cohomology has been defined in \cite{Braun} as the cohomology of the
complex of involution preserving coderivations (actually, he dualizes and then works with
derivations).  We instead define involutive Hochschild homology and cohomology as the derived
functors of involutive abelianization and involutive center.

We propose the following definitions.
\begin{definition}
The involutive Hochschild homology $iHH_*(A,M)$ of an involutive algebra $A$ with coefficients in an
involutive bimodule $M$ is the left derived functor of $iAb: A\mhyphen i\Bimod \to i\Vect$ evaluated
on $M$.  Similarly, the involutive Hochschild cohomology $iHH^*(A,M)$ is the right derived functor
of $iZ$ evaluated on $M$. 
\end{definition}

Equivalently,
\begin{align*}
HH_*(A,M) = \mathrm{Tor}_*^{\Aie}(A,M)\\
HH^*(A,M) = \mathrm{Ext}^*_{\Aie}(A,M).
\end{align*}

When $A$ is projective as an involutive vector space, then by Proposition \ref{prop:resolution} the
usual bar resolution in fact provides a resolution in the involutive setting, and so $iHH_*(A,M)$
and $iHH^*(A,M)$ can be computed by the complexes
\[
Bar(A)\otimes_{\Aie} M = iAb(Bar(A)\otimes_A M)
\]
and
\[
\ihomA(Bar(A),M)
\]
respectively.

The standard Hochschild chain complex, $C_*(A,M)$, is the abelianization of the $A$-bimodule
$Bar(A)\otimes_A M$, or equivalently it is $Bar(A)\otimes_{A^e} M$, and this has the familiar
description
\begin{equation}\label{eq:usual-complex}
C_n(A,M) \cong A^{\otimes n} \otimes_k M,
\end{equation}
with differential 
\begin{align*}
d: a_1\otimes \cdots \otimes a_n \otimes m \quad \mapsto  & \quad a_2 \otimes \cdots \otimes a_n \otimes ma_1 \\
&+ \sum^n_{i=1} (-1)^i a_1 \otimes \cdots \otimes a_i a_{i+1} \otimes \cdots \otimes a_n \\
& + (-1)^n a_1 \otimes\cdots \otimes a_{n-1}\otimes a_n m.
\end{align*}

If $A$ is an involutive algebra (that is projective as an involutive vector space) and $M$ is an
involutive bimodule then the involutive Hochschild homology is computed by the complex
$Bar(A)\otimes_{\Aie} M$, and by Proposition \ref{prop:Ae-to-Aie-quotient}, this is the
$\Z/2$-coinvariants of $Bar(A)\otimes_{A^e} M$ for the action given by
$a_0 \otimes \cdots \otimes a_{n+1} \otimes m \mapsto a_{n+1}^* \otimes \cdots \otimes a_n^* \otimes
m^*$.
Under the identification in \eqref{eq:usual-complex}, the $\Z/2$-action on $Bar(A) \otimes_A M$
corresponds to
\[
a_1 \otimes \cdots \otimes a_n \otimes m  \mapsto a_n^* \otimes \cdots \otimes a^*_1 \otimes m^*.
\]
We thus have the following result.

\begin{proposition}
If $A$ is an involutive algebra that is projective as an involutive vector space, and $M$ is an
involutive bimodule, then $iHH_*(A,M)$ is computed by the complex $iC_*(A,M)$ defined as
\begin{align*}
iC_n(A,M) & = A^{\otimes n} \otimes M / (a_1 \otimes \cdots \otimes a_n \otimes m  -  a_n^* \otimes
\cdots \otimes a^*_1 \otimes m^*) \\
& \cong C_n(A,M)_{\Z/2},
\end{align*}
with differential induced by the usual Hochschild differential.
\end{proposition}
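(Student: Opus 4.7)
The plan is to combine three ingredients already in place: the derived functor definition of $iHH_*$, the projective resolution supplied by Proposition \ref{prop:resolution}, and the coinvariant description of $\otimes_{\Aie}$ from Proposition \ref{prop:Ae-to-Aie-quotient}. First, I would observe that since $A$ is projective as an involutive vector space, Proposition \ref{prop:resolution} provides a projective resolution $Bar(A) \to A$ in $A\mhyphen i\Bimod$, so $iHH_*(A,M) = \mathrm{Tor}^{\Aie}_*(A,M)$ is computed as the homology of the chain complex $Bar(A) \otimes_{\Aie} M$.

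Next, applying Proposition \ref{prop:Ae-to-Aie-quotient} degreewise identifies this with the $\Z/2$-coinvariants of $Bar(A) \otimes_{A^e} M$, where $\Z/2$ acts diagonally via the involutions on both factors. This identification is natural in the bimodule argument, so it intertwines the bar differential with the differential on the quotient. I would then invoke the classical isomorphism $Bar(A) \otimes_{A^e} M \cong C_*(A,M)$ together with the description $C_n(A,M) \cong A^{\otimes n} \otimes_k M$ from \eqref{eq:usual-complex} to move everything to the usual Hochschild complex.

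The only remaining point to verify is that, under these identifications, the diagonal $\Z/2$-action $a_0 \otimes \cdots \otimes a_{n+1} \otimes m \mapsto a_{n+1}^* \otimes \cdots \otimes a_0^* \otimes m^*$ on $Bar(A) \otimes_{A^e} M$ becomes the promised action $a_1 \otimes \cdots \otimes a_n \otimes m \mapsto a_n^* \otimes \cdots \otimes a_1^* \otimes m^*$ on $A^{\otimes n} \otimes_k M$. This is the main (and essentially only) computation, and it amounts to tracking how the outer bar factors $a_0$ and $a_{n+1}$ get absorbed into $M$ via the bimodule action while commuting with the involution; the bookkeeping is already performed in the paragraph immediately preceding the proposition, so I would simply cite it. Since the bar differential commutes with the involution (as noted in Section 3.2), it descends to the coinvariant quotient, yielding the stated complex $iC_*(A,M)$ with differential induced from the Hochschild differential.
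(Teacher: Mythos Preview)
Your proposal is correct and follows exactly the same route as the paper: the paper does not give a separate proof of this proposition but rather derives it in the paragraphs immediately preceding the statement (ending with ``We thus have the following result''), invoking Proposition~\ref{prop:resolution}, Proposition~\ref{prop:Ae-to-Aie-quotient}, and the identification~\eqref{eq:usual-complex} in precisely the order you describe. Your observation that the tracking of the $\Z/2$-action under~\eqref{eq:usual-complex} is ``already performed in the paragraph immediately preceding the proposition'' is exactly right.
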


The $\Z/2$ action on $C_*(A,M)$ induces an action on $HH_*(A,M)$. 
\begin{proposition}
  If the characteristic of the ground field $k$ is different from 2 then
\[
iHH_*(A,M) = HH_*(A,M)_{\Z/2}.
\]
\end{proposition}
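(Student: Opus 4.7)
The plan is to leverage the previous proposition's identification $iC_*(A,M) \cong C_*(A,M)_{\Z/2}$ together with the exactness of $\Z/2$-coinvariants in characteristic different from 2. Concretely, I will show that under the characteristic hypothesis the coinvariants functor $(-)_{\Z/2} : i\Vect_k \to \Vect_k$ is exact, and then use the standard fact that an exact additive endofunctor commutes with taking homology of a chain complex.

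First I would recall Maschke's theorem: when $\mathrm{char}(k) \neq 2$, the group algebra $k[\Z/2]$ is semisimple, so every involutive vector space $V$ splits canonically as $V \cong V^{\Z/2} \oplus V^{-}$, where $V^{-}$ denotes the $(-1)$-eigenspace of $\tau$. The splitting is realized explicitly by the averaging idempotent $e = \frac{1}{2}(1 + \tau) \in k[\Z/2]$. In particular the natural map $V^{\Z/2} \hookrightarrow V \twoheadrightarrow V_{\Z/2}$ is an isomorphism, and the coinvariants functor is naturally isomorphic to the invariants functor and is exact.

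Next, applying this exactness to the Hochschild chain complex: by the preceding proposition, $iHH_*(A,M)$ is the homology of $iC_*(A,M) \cong C_*(A,M)_{\Z/2}$. Since $(-)_{\Z/2}$ is exact it commutes with passage to kernels, images, and quotients, hence
\[
H_n\bigl(C_*(A,M)_{\Z/2}\bigr) \;\cong\; H_n\bigl(C_*(A,M)\bigr)_{\Z/2} \;=\; HH_n(A,M)_{\Z/2},
\]
giving the claimed isomorphism. To make the final identification match the induced $\Z/2$-action mentioned in the statement, I would note that the isomorphism $iC_*(A,M) \cong C_*(A,M)_{\Z/2}$ is $\Z/2$-equivariant for the action described just before the proposition, so the action on $HH_*(A,M)$ obtained by passing to homology is the same as the one computed by coinvariants of the chain-level action.

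The only subtlety — not really an obstacle — is checking that the projectivity hypothesis on $A$ as an involutive vector space is not needed here beyond its role in ensuring that the bar complex computes both $HH_*$ and $iHH_*$. Once that is in place, the argument is a formal consequence of Maschke's theorem and the exactness of coinvariants; no further calculation with the explicit bar differential is required.
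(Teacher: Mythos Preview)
Your proof is correct and follows exactly the same approach as the paper, which simply notes that the result is immediate from the exactness of $\Z/2$-coinvariants in characteristic different from $2$. Your version is more explicit (invoking Maschke's theorem and spelling out why exact functors commute with homology), and you might also remark that the projectivity hypothesis on $A$ is automatically satisfied here since every involutive vector space is projective when $\mathrm{char}(k)\neq 2$.
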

\begin{proof}
This is immediate from the fact that taking $\Z/2$-coinvariants is exact when the characteristic is
not equal to 2.
\end{proof}

\section{Comparison with Braun's definition}

We now compare our definition of involutive Hochschild cohomology with Braun's definition and show
that they agree when for involutive algebras that are projective as involutive vector spaces.  

\subsection{Derivations and coderivations}
Given a graded algebra $A$, let $\mathrm{Der}(A)$ denote the space of graded derivations of $A$ into
itself, and given a coalgebra $C$, let $\mathrm{Coder}(C)$ denote the space of coderivations of $C$
into itself.  
%Note that the dual, $C^\vee$, of a coalgebra $C$ is an algebra, and \[\mathrm{Coder}(C)=\mathrm{Der}(C^\vee).\] 
If $A$ is an involutive algebra then we write $i\mathrm{Der}(A)$ for the subspace of
involution-preserving derivations, and likewise for the notation $i\mathrm{Coder}(C)$ if $C$ carries
an involution.  The spaces of derivations and coderivations are graded Lie algebras with respect to
the commutator bracket, and the subspaces $i\mathrm{Der}(A)$ and $i\mathrm{Coder}(A)$ are Lie
subalgebras.

If $V$ is a graded involutive vector space then the tensor algebra $TV$ carries an involution given
by
\[
(v_1\otimes \cdots \otimes v_n)^* = v_n^* \otimes \cdots \otimes v_1^*.
\]
If $A$ is an associative algebra then the multiplication map induces a coderivation $m$ on
$T\Sigma A$ of degree $-1$ and a derivation $m'$ on $\widehat{T}\Sigma^{-1} A^\vee$ also of degree
$-1$.  If $A$ is an involutive algebra then $m$ and $m'$ are both involution-preserving.  The
commutator $[m,-]$ defines a differential on $i\mathrm{Coder}(T\Sigma A)$, and likewise for the space
$i\mathrm{Der}(\widehat{T}\Sigma^{-1} A^\vee)$ of continuous involution-preserving derivations on
the completed tensor algebra.

Braun defines the involutive Hochschild cohomology to be the cohomology computed by the complex
\[
\Sigma^{-1} i\mathrm{Der}(\widehat{T}\Sigma^{-1}A^\vee).
\]
Since $T\Sigma A$ dualizes to $\widehat{T}\Sigma^{-1}A^\vee$, algebra derivations on the latter are
the same as coalgebra coderivations on the former, and hence there is an isomorphism of complexes
\[
\Sigma^{-1} i\mathrm{Der}(\widehat{T}\Sigma^{-1}A^\vee) \cong \Sigma^{-1} i \mathrm{Coder}(T\Sigma A).
\]

As we have seen, the bar resolution provides a resolution in the involutive category, and this next proposition
confirms that our derived functor definition of Hochschild cohomology agrees with Braun's definition.
\begin{proposition}
There is a canonical isomorphism of complexes,
\[
\ihomA(Bar(A),A) \cong \Sigma^{-1} i \mathrm{Coder}(T\Sigma A).
\]
\end{proposition}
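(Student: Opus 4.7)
The plan is to chain together three canonical identifications --- the free involutive bimodule adjunction, the cofree coalgebra universal property, and a degree shift --- and then verify compatibility of the differentials.

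First, I would use the identification $Bar(A) \cong \Aie \otimes_{\Z/2} T\Sigma A$ of involutive graded bimodules established in the discussion preceding Proposition \ref{prop:resolution}. The bijection given in the proof of Proposition \ref{prop:proj-bimod} between $\ihomA(A^e \otimes_k V, N)$ and involution-preserving linear maps $V \to N$ (sending $f$ to $v \mapsto f(1 \otimes v \otimes 1)$) then yields a natural graded isomorphism
\[
\ihomA(Bar(A), A) \cong \Hom_{i\Vect_k}(T\Sigma A, A),
\]
where on the right the grading comes from tensor degree in $T\Sigma A$ and $\Hom_{i\Vect_k}$ denotes involution-preserving linear maps of arbitrary degree.

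Second, I would invoke the classical universal property of the cofree tensor coalgebra: a coderivation $D$ on $T\Sigma A$ is uniquely determined by its corestriction $\pi \circ D : T\Sigma A \to \Sigma A$ to the cogenerators, giving $\mathrm{Coder}(T\Sigma A) \cong \Hom_k(T\Sigma A, \Sigma A)$. The involutive enhancement I need is that $D$ preserves the involution on $T\Sigma A$ if and only if $\pi \circ D$ preserves involutions. This follows because the deconcatenation coproduct $\Delta$ intertwines the anti-automorphism involution $(v_1 \otimes \cdots \otimes v_n)^* = v_n^* \otimes \cdots \otimes v_1^*$ with the flipped involution on $T\Sigma A \otimes T\Sigma A$, and the explicit reconstruction of $D$ from $\pi \circ D$ is built entirely out of $\Delta$ and $\pi \circ D$. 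This yields
\[
i\mathrm{Coder}(T\Sigma A) \cong \Hom_{i\Vect_k}(T\Sigma A, \Sigma A).
\]
Combining with the degree shift $\Hom_{i\Vect_k}(T\Sigma A, \Sigma A) \cong \Sigma \Hom_{i\Vect_k}(T\Sigma A, A)$ produces the desired isomorphism of graded involutive vector spaces.

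To finish, I would check that the bar differential on $\ihomA(Bar(A), A)$ matches the differential $[m,-]$ on $\Sigma^{-1} i\mathrm{Coder}(T\Sigma A)$ under the composite. Both are combinatorial expressions built from the multiplication on $A$, and they are already known to agree in the classical non-involutive case via the standard Hochschild cochain comparison; the computation carries over verbatim because every map in sight preserves involutions. The main obstacle is the second step: carefully verifying that the classical Coder/Hom universal property restricts to the involution-preserving subspaces on both sides. This is essentially bookkeeping with the anti-automorphism relation $\Delta \circ (-)^* = (\mathrm{flip}) \circ ((-)^* \otimes (-)^*) \circ \Delta$, but it is the one genuinely new compatibility check; once it is done, everything else is a direct application of the adjunctions developed earlier in the paper combined with the classical story.
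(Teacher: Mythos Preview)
Your proposal is correct and follows essentially the same route as the paper: both reduce $\ihomA(Bar(A),A)$ to involution-preserving linear maps $T\Sigma A \to A$ via the free (involutive) bimodule adjunction, then invoke the universal property of the tensor coalgebra plus a degree shift to identify these with $\Sigma^{-1}i\mathrm{Coder}(T\Sigma A)$, and finally defer the differential check to the classical computation. Your write-up is in fact slightly more explicit than the paper's about \emph{why} the coderivation/corestriction bijection restricts to the involutive subspaces (via the relation $\Delta\circ(-)^* = \mathrm{flip}\circ((-)^*\otimes(-)^*)\circ\Delta$), whereas the paper simply asserts that one checks this directly.
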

\begin{proof}
  Since $Bar(A) \cong A\otimes_k T\Sigma A \otimes_k A$, the degree $n$ part of
  $\Hom_{A-\Bimod}(Bar(A),A)$ is the space of degree $-n$ linear maps $T\Sigma A \to A$, which is
  isomorphic to the space of degree $(-n-1)$ linear maps $T\Sigma A \to \Sigma A$.  By the universal
  property of the tensor coalgebra, there is a bijection between degree $(-n-1)$ linear maps
  $T\Sigma A \to \Sigma A$ and degree $(-n-1)$ coderivations on $T\Sigma A$.  Hence the degree $n$
  part of $\Hom_{A-\Bimod}(Bar(A),A)$ is isomorphic to the degree $n$ part of
  $\Sigma^{-1} \mathrm{Coder}(T\Sigma A)$.  One now checks directly that this isomorphism restricts
  to an isomorphism of graded vector spaces
\[
i\Hom_{A-\Bimod}(Bar(A),A) \cong \Sigma^{-1} i\mathrm{Coder}(T\Sigma A).
\]
With a bit of tedious but straightforward algebra one can check that
the differentials coincide under the above isomorphism, cf.~\cite[\S12.2.4]{Loday-Vallette}.  
\end{proof}

% \section{Relation to deformation theory}

% involutive version of classical theorems --- involutive hochschild cohomology should classify
% deformations of $A$ as an involutive algebra?

%Cotangent complex?

% \section{Computations}

% We now give a couple of simple examples of computations with involutive Hochschild homology.

% \subsection{Tensor algebra}

% Let $V$ be an involutive vector space.  We will now compute the involutive Hochschild homology of
% the tensor algebra $TV$, equipped with the involution $(v_1\otimes \cdots \otimes v_n)^*=
% v_n^*\otimes \cdots \otimes v_1^*$. 

% Consider the subcomplex $C^{sm}(TV) \subset C(TV)$ given by
% \[
% \cdots \to 0 \to TV \otimes V \stackrel{b}{\to} TV.
% \]
% The inclusion is a quasi-isomorphism by \cite[Proposition 3.1.2]{Loday}.  

% The diherdral group $\Z/2 \rtimes \Z/m = D_m$ of order $2m$ acts on $V^{\otimes m}$
  
% \subsection{Group ring}

\bibliographystyle{amsalpha}
\bibliography{bib}

\end{document}